\newtheorem{theorem}{Theorem}[section] 
\newtheorem{lemma}[theorem]{Lemma}
\theoremstyle{definition}
\newtheorem{definition}[theorem]{Definition}
\newcommand{\R}{\ensuremath{\mathbb{R}}} 
\newcommand{\integer}{\ensuremath{\mathbb{Z}}}
\newcommand{\T}{\ensuremath{\mathcal{T}}}
\newcommand{\scirc}{\raise1pt\hbox{$\,\scriptstyle\circ\,$}}
\DeclareMathOperator{\inn}{int}
\DeclareMathOperator{\aff}{aff}
\newsavebox{\matrixbox}
\begin{document}

\title[Parallelogram tilings, Worms and Finite Orientations]{Parallelogram tilings, Worms and Finite Orientations}

%[Parallelogram tilings and infinite rotations]

\author[D.~Frettl\"oh]{Dirk Frettl\"oh}
\address{Institut f\"ur Mathematik, FU Berlin}
\email{frettloe@mi.fu-berlin.de}
\urladdr{http://page.mi.fu-berlin.de/frettloe/}

\author[E.~Harriss]{Edmund Harriss}
\address{Department of Mathematical Sciences, University of Arkansas}
\email{edmund.harriss@mathematicians.org.uk}
\urladdr{http://www.mathematicians.org.uk/eoh}

\date{\today}

\maketitle

\begin{abstract}
This paper studies properties of tilings of the plane by  
parallelograms. In particular it is established that 
in parallelogram tilings using a finite number of shapes all tiles 
occur in only finitely many orientations.
\end{abstract}

\section{Introduction}

Parallelogram tilings, in particular rhomb tilings, are a fundamental family in the mathematical study of tilings, in both aperiodic order with its links to quasicrystals and in the study of random tilings. In addition they provide visual models for a wide range of mathematical structures. 

The most famous aperiodic parallelogram tiling is certainly the Penrose tiling~\cite{Penrose:PCNTP, Gardner:ENTTE, Harriss:TE} in the version with rhombic tiles. Many other aperiodic tilings use parallelograms as tiles, including the Ammann-Beenker tilings~\cite{Ammann:AT,Beenker:ATONP}, or more generally, all canonical projection tilings~\cite{Forrest:COCPT,Harriss:CSTOA}. Further examples arise in the study of geometric representations of substitutive dynamical systems,see for instance~\cite{Fogg:SIDAC}. Such models are of particular importance in the study of mathematical quasicrystals~\cite{Shechtman:MPWLR} and systems with long range aperiodic order~\cite{Baake:DIMQ, Moody:TMOLR}. 

In random tilings parallelogram tilings are also of importance. Usually finite tilings are considered, lozenge tilings of polygons~\cite{Kenyon:LSCBE,Bodini:DRT}, or parallelotope tilings of zonotopes~\cite{Destainville:RTHSIMT,Chavanon:RTDSS}, but infinite random parallelogram tilings are also studied~\cite{Bodini:CFRTWP}. In this paper we focus on infinite tilings of the plane, but some results are also valid and relevant for finite tilings, for instance Lemma~\ref{lem:crossing_lemma} and Lemma~\ref{lem:travel}. 

Parallelogram tilings also turn up in the study of pseudo-line arrangements in combinatorics (for example~\cite{Agarwal:PADAA,Goodman:PCBGS}), as a visualisation of reduced words in Coxeter groups~\cite{Elnitsky:RTPCRWCG} and, through random tilings and Dimer models, they link to a wide variety of physical models~\cite{Stienstra:HSVQDDD}. The use of parallelogram tilings in a wide variety of mathematics is discussed further on Math Overflow~\cite{Speyer:RTWMTTD}.

Interestingly, there are aperiodic tilings of the plane with polygonal 
tiles where the tiles occur in an infinite number of orientations: The 
Pinwheel Tiling~\cite{Radin:TPTOT} is an example of a tiling using only one kind
of tile (a triangle with edge lengths $1,2$ and $\sqrt{5}$) where the tiles 
occur in infinitely many orientations throughout the tiling. A 
comprehensive study of this phenomenon is~\cite{Frettloh:STWSCS}. There are also 
tilings using two kinds of quadrilaterals in which the tiles occur in 
infinitely many orientations~\cite{Baake:RAPSFWAPDPP}, see for instance the Kite Domino 
Tiling in the online resource~\cite{Harriss:TE}. (Click ``Kite-Domino'' on the 
main page for information about the Kite Domino Tiling, and then 
``Infinite rotations'' for a list of other examples.) 
Our main result states that such a behaviour is impossible if all tiles 
are parallelograms. 

By definition, a parallelogram has two pairs of parallel edges. In a 
parallelogram tiling therefore there are natural lines of tiles linked 
each sharing a parallel edge with the next. These lines are called worms 
and were used by Conway in studying the Penrose tilings~\cite{Grunbaum:TandP}. We will 
follow Conway to call these lines \emph{worms}. 
Figure~\ref{fig:Tile_between_two_worms} shows three (finite pieces of) such worms. 
In this paper we consider the structure of these worms and use them to 
prove results on parallelogram tilings. In particular, our main result 
states that no parallelogram tiling using finitely many different 
parallelograms can have tiles in infinitely many orientations. 

This paper is organised as follows: The first section defines 
the necessary terms and notations. The next section is dedicated to the 
proof of our main result Theorem~\ref{thm:infinite_orientations}. It 
contains several results which may be of interest on their own in the 
study of parallelogram tilings, for instance the Crossing Lemma \ref{lem:crossing_lemma} 
or the Travel Lemma~\ref{lem:travel}. The last 
section contains some additional remarks. 

A tiling $\T$ of the plane $\R^2$ is a packing of $\R^2$ which is also a 
covering of $\R^2$. Usually, the tiles are nice compact sets, and the 
tiling is a countable collection of tiles $T_i$: $\T = \{ T_1, T_2, \ldots\}$. 
The covering condition can then be formulated as $\bigcup_i T_i = \R^2$, 
and the packing condition as $\inn(T_i) \cap \inn(T_j) = \varnothing$ 
for $i \ne j$, where $\inn(A)$ denotes the interior of $A$. 

The set of congruence classes of tiles in a tiling $\T$ is called 
{\em protoset} of $\T$. Its elements are called {\em prototiles} of $\T$. 
A tiling is called {\em locally finite}, if each ball $B \subset \R^2$ 
intersects only a finite number of tiles of $\T$. A tiling $\T$ is 
called {\em vertex-to-vertex}, if all prototiles are polygons, and 
for all tiles in $\T$ holds: If $x \in \R^2$ is vertex of some tile 
$T \in \T$, then $x \in T' \in \T$ implies $x$ is a vertex of $T'$. 

{\bf General Assumption:} Throughout the paper, only locally finite 
vertex-to-vertex tilings  are considered. Moreover, the the prototiles 
will always be parallelograms. In particular, all tiles have nonempty 
interior. 

\begin{definition}
\label{def:worm}
Let $T$ be a tile in a parallelogram tiling, and let $e$ be one of its edges. There is a unique tile $T_1$ sharing the edge $e$ with $T$. Let $e_1 \ne e$ denote the edge of $T_1$ parallel to $e$. There is a unique tile $T_2$ sharing this edge with $T_1$. Repeating this yields a (one-sided) infinite sequence of tiles $T,T_1, T_2, \ldots$ uniquely given by $T$ and $e$. Repeating this on the edge of $T$ which is opposite of $e$ yields a further  (one-sided) infinite sequence of tiles $T_{-1}, T_{-2}, \ldots$. In this way, $T$ and $e$ define a unique biinfinite sequence $W:=\ldots T_{-2}, T_{-1}, T, T_1, T_2, \ldots$. We will call this sequence the {\em worm given by $T$ and $e$}, and any such sequence just a {\em worm}. The intersection of two worms $W, W'$ is defined as $W \cap W' = \{ T \, | \, T \in W, T \in W' \}$. 
\end{definition}

Worms are also called {\em de Bruijn lines} \cite{Destainville:RTHSIMT}
or {\em ribbons} \cite{Bodini:DRT}. Worms correspond 
to pseudo-line configurations~\cite{Agarwal:PADAA,Goodman:PCBGS}, 
and to grid lines used in the construction of canonical projection tilings 
by the multigrid method~\cite{Gahler:EOTGG}. 

\begin{definition} % Worm Families
\label{def:worm_species}
Let $e$ be some edge in a parallelogram tiling. The set of all worms 
which are defined by edges parallel to $e$ is called the 
{\em worm family (given by $e$)}. 
\end{definition}

\section{Proof of the Main Result} % (fold)

\label{sec:basic_results}

In this section we collect some basic results on worms, which will be 
used in the next section. The first result states that a worm cannot 
bend ``too much''. In order to state this precisely, we define the 
open cone $C(x,\alpha)$ with axis $x \in \R^2$ and angle $\alpha \in 
[0,\pi[$ by $C(x,\alpha) := \{ z \in \R^2 \, | \, |\angle(x,z)| < \alpha \}$, 
where $\angle(x,z) = \arccos \frac{ \langle x,z \rangle}{\|x\| \|z\|}$ 
denotes the angle between $x$ and $z$.

%\begin{figure}[htb]
%\begin{center}
%\includegraphics[width=100mm]{nogo-cones}
%\end{center}
%\caption{The worm defined by the edge $e$ cannot intersect the open cones 
%$C_1$ or $C_2$.}
%\label{fig:Impossible_Cones}
%\end{figure}

\begin{figure}[htb]
	$$\begin{xy}
		\xyimport(80,110){\includegraphics{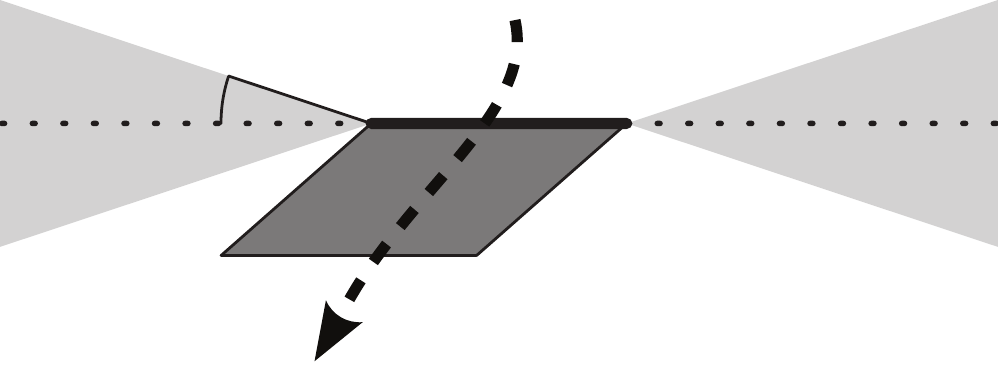}},
		(35,84)*!U\txt{e},
		(20,84)*!U\txt{$\alpha$},
		(12,82)*!U\txt{\Large{$C_1$}},
		(68,82)*!U\txt{\Large{$C_2$}}
	\end{xy}$$
	\caption{The worm defined by the edge $e$ cannot intersect the open cones $C_1$ or $C_2$.}
	\label{fig:Impossible_Cones}
\end{figure}

\begin{lemma}  \label{lem:cone_lemma}
Let $\T$ be some parallelogram tiling, with finite or infinite protoset. 
Denote the infimum of all interior angles of the prototiles by $\alpha$.
Let $W$ be a worm given by some tile $T$ and some edge $e$. Denote the 
two endpoints of $e$ by $e_1$ and $e_2$. If $\alpha>0$ then $W$ is 
contained in 
\[ \R^2 \setminus \big( C_1 \cup C_2 \big), \quad \mbox{where} \quad 
C_1:= e_1+C(e_1-e_2, \alpha), \; C_2 := e_2+C(e_2-e_1, \alpha). \]
If $\alpha=0$ then $W$ is contained in $e \cup \big( \R^2 \setminus 
\aff(e) \big)$, where $\aff(e)$ denotes the affine span of $e$.
\end{lemma}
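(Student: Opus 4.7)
The plan is to fix coordinates so that $e_1 = \vect{0}$ and $e_2 = (L, 0)$ on the positive $x$-axis, with $L = \|e_2 - e_1\|$, and to parametrize the worm $W = (T_k)_{k \in \integer}$ by the edges $f_k$ parallel to $e$ (with $f_0 = e$, and $f_k$ separating $T_k$ from $T_{k+1}$). Each $f_k$ is a translate of $e$, so I can write $f_k = [v_k^L, v_k^R]$ with $v_k^R - v_k^L = e_2 - e_1$, and define the transverse vector $d_k := v_k^L - v_{k-1}^L = v_k^R - v_{k-1}^R$ (the parallelogram equality). After choosing the orientation so that $T_1, T_2, \ldots$ lie above $e$ and $T_0 = T, T_{-1}, \ldots$ lie below, every $d_k$ points strictly into the open upper half-plane, and the angle $\theta_k$ of $d_k$ with the positive $x$-axis is precisely the interior angle of $T_k$ at vertex $v_{k-1}^L$. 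Hence $\theta_k \in [\alpha, \pi - \alpha]$, giving the pointwise bounds $\sin \theta_k \geq \sin \alpha$ and $|\cos \theta_k| \leq \cos \alpha$.

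For the first conclusion ($\alpha > 0$), I take $p \in T_k$ and parametrize $p = v_{k-1}^L + s(e_2 - e_1) + t\, d_k$ with $s, t \in [0, 1]$. Telescoping along the worm expresses $p - e_1 = D + s(e_2 - e_1)$, where $D = \sum_j c_j d_j$ is a linear combination of the $d_j$'s with all coefficients $c_j \geq 0$ (if $k \geq 1$) or all $c_j \leq 0$ (if $k \leq 0$). Setting $R := \sum_j |c_j|\,|d_j|$, the pointwise bounds aggregate to $|D_y| \geq R \sin \alpha$ and $|D_x| \leq R \cos \alpha$. To rule out $p \in C_1$: if $(p - e_1)_x \geq 0$ then $p$ lies in the closed right half-plane and hence outside $C_1 \subseteq \{x < 0\}$; otherwise $D_x < -sL \leq 0$, so $|(p - e_1)_x| = |D_x| - sL \leq R \cos \alpha$ while $|(p - e_1)_y| = |D_y| \geq R \sin \alpha$, forcing the angle between $p - e_1$ and $e_1 - e_2$ to be at least $\alpha$. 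Hence $p \notin C_1$. The exclusion from $C_2$ is the symmetric argument applied to $p - e_2 = D + (s - 1)(e_2 - e_1)$. The degenerate case $R = 0$ pins $p$ to the edge $e$, where $p \notin C_1 \cup C_2$ is immediate.

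For the second conclusion ($\alpha = 0$), the quantitative angle bounds degenerate, but one still has $\sin \theta_k > 0$ strictly since tiles are non-degenerate. Hence $v_{k-1}^L$ has strictly positive $y$-coordinate for every $k \geq 2$ (a sum of strictly upward vectors), so $T_k \subset \{y > 0\}$ for $k \geq 2$, while $T_1 \cap \aff(e) = f_0 = e$; by symmetry the same picture holds for $k \leq 0$, giving $W \cap \aff(e) = e$, i.e., $W \subset e \cup (\R^2 \setminus \aff(e))$. The main obstacle is purely notational bookkeeping: keeping sign conventions consistent across the two halves of the worm, and checking that the boundary cases ($p$ on $e$, or $p$ at an apex $e_1, e_2$) are correctly handled by the open-cone convention.
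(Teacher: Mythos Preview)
Your argument is correct and makes rigorous exactly the geometric intuition the paper invokes: each tile $T_k$ contributes a transverse step $d_k$ whose angle with the edge direction lies in $[\alpha,\pi-\alpha]$, so the cumulative displacement $D$ satisfies $|D_y|\ge R\sin\alpha$ and $|D_x|\le R\cos\alpha$, which is precisely the statement that $p$ cannot lie inside either open cone. The paper's own proof is essentially a one-paragraph appeal to a figure (``using the prototile with the smallest angle $\alpha$ alone, the worm can line up along the boundary line of one of the cones at best''), so your coordinate-and-telescoping version is not a different approach but rather a careful quantitative realisation of the same idea. The one thing the paper leaves implicit and you make explicit is the aggregation step---that the per-tile angle bound survives summation because all the $d_j$ have $y$-components of the same sign---and your treatment of the $\alpha=0$ case via strict positivity of each $\sin\theta_k$ matches the paper's remark that the first tile already has a positive interior angle.
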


\begin{proof}
By inspection of Figure~\ref{fig:Impossible_Cones} we observe 
that the worm defined by the edge $e$ cannot intersect the cones (shaded) 
$C_1$ and $C_2$, where $\alpha>0$ 
is the smallest interior angle of the prototiles. Using the 
prototile with the smallest angle $\alpha$ alone, the worm can line 
up along the boundary line of one of the cones at best.

If $\alpha=0$, the open cones are empty. Nevertheless, since any tile has 
nonempty interior, the particular tile 
in the worm $W$ containing $e$ as an edge has some positive interior 
angle. Then, by the same reasoning as above, $W$ cannot touch the 
line $\aff(e)$ apart from $e$. 
\end{proof}

The last lemma shows also that a worm can have no loop. We state this result as a an additional lemma. 

\begin{lemma} \label{lem:loop_lemma}
A worm has no loop.
\end{lemma}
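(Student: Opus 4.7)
The plan is to use Lemma~\ref{lem:cone_lemma} iteratively along the worm to observe that consecutive tiles lie on strictly alternating sides of the parallel edges they share. This yields a strictly monotone height function along the worm, which immediately rules out repetitions.

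I would set up notation by writing the worm as $W = \ldots, T_{-1}, T_0, T_1, \ldots$ with $T_0 = T$, letting $e^{(i)}$ denote the edge shared between $T_i$ and $T_{i+1}$ (so $e^{(0)} = e$), and picking a unit vector $\vec{n}$ perpendicular to $e$ pointing from $T_0$ toward $T_1$. Since every $e^{(i)}$ is parallel to $e$ by the definition of a worm, the linear functional $h(x) = \langle x, \vec{n} \rangle$ is constant on each $e^{(i)}$; call that common value $h_i$, normalized so that $h_0 = 0$.

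The key step is to prove by induction on $|i|$ that $h_i$ is strictly increasing for $i \ge 0$ and strictly decreasing for $i \le 0$. For the forward base case, $T_1$ is a parallelogram with edges $e^{(0)}$ and $e^{(1)}$ as its two edges parallel to $e$, lies in the closed strip bounded by $\aff(e^{(0)})$ and $\aff(e^{(1)})$, and has nonempty interior on the $h>0$ side of $\aff(e)$, so $h_1 > 0$. For the inductive step, $T_i$ has parallel edges $e^{(i-1)}$ and $e^{(i)}$, so its interior lies strictly on the $h < h_i$ side of $\aff(e^{(i)})$; since $T_{i+1}$ lies on the opposite side of $e^{(i)}$ from $T_i$ and has nonempty interior, we obtain $h_{i+1} > h_i$. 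The backward direction is symmetric. Once monotonicity is in hand, each $T_i$ occupies the closed strip between heights $h_{i-1}$ and $h_i$, these strips have pairwise disjoint interiors, and each $T_i$ has nonempty interior, so the $T_i$ are pairwise distinct and $W$ has no loop.

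The only place requiring care is verifying in the inductive step that $T_{i+1}$ sits strictly on one side of $\aff(e^{(i)})$ rather than straddling it. This holds because $e^{(i)}$ is a full edge of the parallelogram $T_{i+1}$, combined with the standing nonempty-interior hypothesis — which is precisely the mechanism underlying Lemma~\ref{lem:cone_lemma} (in particular its $\alpha=0$ form, where the worm avoids $\aff(e)$ away from $e$). No further obstacle is anticipated.
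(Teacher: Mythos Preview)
Your argument is correct and is essentially an explicit unpacking of the paper's one-line proof: the paper simply observes that Lemma~\ref{lem:cone_lemma} (in particular its $\alpha=0$ case, asserting that the worm meets $\aff(e)$ only in $e$) already forces the no-loop property, and your strictly monotone height function $h_i$ is precisely the quantitative form of that observation applied edge by edge. The only difference is presentational---you make the monotone progression perpendicular to $e$ explicit, whereas the paper leaves the deduction to the reader.
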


\begin{lemma}[Crossing Lemma] \label{lem:crossing_lemma}
Two worms cross at most once. Worms in the same family never cross.
\end{lemma}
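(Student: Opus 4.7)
I would treat the two claims in Lemma~\ref{lem:crossing_lemma} separately. The same-family claim follows immediately from Definition~\ref{def:worm}: if worms $W$ and $W'$ have defining edges parallel to a common direction $d$ and share a tile $T$, then both are the unique biinfinite sequence generated by $T$ and its edge parallel to $d$, so $W=W'$. Hence distinct worms in one family share no tile and in particular cannot cross.

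For the main claim, suppose for contradiction that worms $W$ (direction $d$) and $W'$ (direction $d'$), necessarily with $d\not\parallel d'$ by the same-family claim, share two distinct tiles $T$ and $T'$; choose them so that the arc $A_W$ of $W$ between $T$ and $T'$ meets $W\cap W'$ only at its endpoints. Represent each worm by its pseudo-line: the biinfinite curve $\gamma$ obtained by connecting the midpoints of the successive shared defining edges through the interior of each tile of the worm, which is simple by Lemma~\ref{lem:loop_lemma}. Concatenating $\gamma_W|_{A_W}$ with any arc of $\gamma_{W'}$ between $T$ and $T'$ yields a Jordan curve $\Gamma\subset\R^2$ enclosing a bounded region $R$.

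The plan is a descent. Granting that $R$ contains a tile $\tilde T$ with $\inn(\tilde T)\subset R$, let $\tilde W$ be the worm through $\tilde T$ with direction $d$. By the same-family claim $\tilde W\cap W=\varnothing$, so $\gamma_{\tilde W}$ does not meet $\gamma_W$; being simple and biinfinite, it must exit $R$ on both ends, and both exits are forced through the $\gamma_{W'}$-portion of $\partial R$. Hence $\tilde W$ and $W'$ share at least two tiles lying strictly between $T$ and $T'$ along $W'$, producing a new two-crossing configuration whose enclosed region lies strictly inside $R$ and on whose boundary $\tilde T$ now sits. This strictly decreases the (nonnegative integer) number of tiles with interior contained in the enclosed region, so iteration gives an infinite descent, the desired contradiction.

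The main obstacle is securing the interior tile $\tilde T$. In the simplest subcase, where $T$ and $T'$ are consecutive in both $W$ and $W'$, they would share both a $d$-edge and a $d'$-edge, contradicting that distinct tiles share at most one edge in a locally finite vertex-to-vertex tiling. More generally, a vertex-figure analysis at the corner of $T$ shared by $T$, $T_1\in A_W$, and $S_1\in A_{W'}$ produces a tile in $R$ not belonging to $A_W\cup A_{W'}$; any such tile has its interior in $R$, since $\partial R$ meets only tiles of $A_W\cup A_{W'}$, which supplies the interior tile required for the descent step.
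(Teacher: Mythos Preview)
Your same-family argument is fine and is essentially the paper's observation phrased via uniqueness rather than via ``a crossing tile would have all four edges parallel''.

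For the main claim you take a genuinely different route from the paper. The paper's proof is a two-line orientation argument: at a crossing tile the two defining edges $e,e'$ meet at a vertex, and since every interior angle of a parallelogram is $<\pi$, the worm $W'$ can only pass from one side of $W$ to the other in the direction where $e,e'$ span an angle $<\pi$; once it has crossed, that direction is reversed, so a second crossing is impossible. No Jordan curves, no descent, no auxiliary worms.

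Your Jordan-curve-plus-descent strategy is a legitimate alternative, but as written it has a real gap at exactly the point you flag. You need a tile $\tilde T$ with $\inn(\tilde T)\subset R$ \emph{and} $\tilde T\notin W$, so that the $d$-worm $\tilde W$ through $\tilde T$ is genuinely new. Your vertex-figure sketch only argues for a tile not in $A_W\cup A_{W'}$, which is weaker: if $W\cap W'$ has more than two elements, the infinite tails of $W$ can re-enter $R$ by crossing $\gamma_{W'}|_{A_{W'}}$ at some third intersection tile, and then tiles of $W\setminus A_W$ may well sit inside $R$. Nothing in your choice of $T,T'$ (consecutive only along $W$) excludes this. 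A fix is to choose $T,T'$ so that \emph{both} arcs $A_W$ and $A_{W'}$ are free of further intersection tiles (e.g.\ minimise $|A_W|+|A_{W'}|$ over all pairs in $W\cap W'$), after which the tails of $W$ and $W'$ really do stay outside $R$ and your vertex-figure step goes through; but this needs to be said and checked.

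So: the approach can be completed, and it buys you a proof that never mentions angles and would transfer verbatim to any setting where worms behave like loop-free pseudo-lines. The cost is that it is substantially longer and more delicate than the paper's argument, which exploits the specific convex geometry of parallelograms to settle the matter in one stroke.
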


In other words: The intersection of any two worms is empty or consists 
of a single tile. The intersection of two different worms defined by 
parallel edges is empty.

\begin{figure}[htb]
		$$\begin{xy}
			\xyimport(80,110){\includegraphics{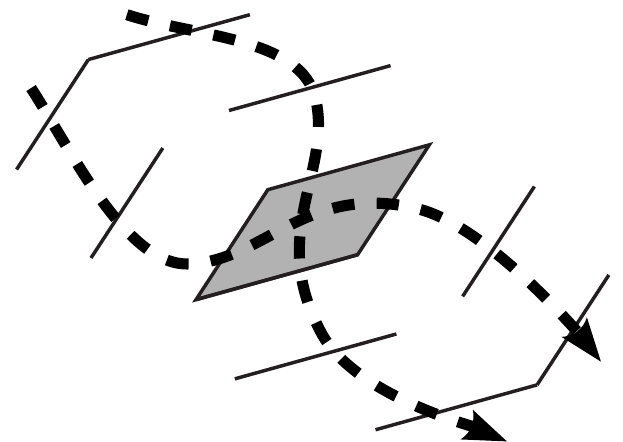}},
			(56,70)*!U\txt{e'},
			(43,42)*!U\txt{e},
		\end{xy}$$
	\caption{Two worms can cross at most once.}
	\label{fig:One_crossing}
\end{figure}

\begin{proof}
Consider two worms in distinct families, and put their defining edges 
together at a point (see Figure~\ref{fig:One_crossing}). Since the tiles
are parallelograms, and the interior angles of parallelograms are less 
than $\pi$,  the two worms can only cross in the direction where the 
edges form an angle less than $\pi$. After crossing this direction is 
reversed. Thus no further crossing is possible.

If the two worms are in the same family the crossing tile would have 
to have all four edges parallel, but no tile (with positive area) 
can have this property.
\end{proof}

\begin{lemma}[Travel Lemma] \label{lem:travel}
Let $\T$ be some parallelogram tiling, with finite or infinite protoset. 
Denote the infimum of all interior angles of the prototiles by $\alpha$.
If $\alpha > 0$, then any two tiles $S,T$ in $\T$ can be connected by 
a finite sequence 
of tiles which are contained in a finite number of worms. Moreover,
less than $\lceil 2 \pi / \alpha \rceil$ different worms are needed. 
(Here $\lceil x \rceil$ denotes the smallest $k \in \integer$ such 
that $k \ge x$.)
\end{lemma}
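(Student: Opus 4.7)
The plan is to construct a connecting chain of tiles from $S$ to $T$ along a straight line segment and then bound the number of distinct worms used by an angular counting argument.

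First I would show every worm extends to infinity in both directions. Lemma~\ref{lem:loop_lemma} rules out any repeated tile, and local finiteness forbids an infinite sequence of tiles from staying in a compact region, so each worm escapes every bounded set on both sides; the Cone Lemma additionally pins down the directions of escape relative to the defining edge.

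Next, take the straight segment $L$ from a generic interior point of $S$ to a generic interior point of $T$, chosen so that $L$ avoids every tile vertex. By local finiteness $L$ meets finitely many tiles in order, $S = U_0, U_1, \ldots, U_n = T$, with consecutive tiles sharing the edge $e_i$ that $L$ crosses. Setting $W_i := W(e_i)$, both $U_i$ and $U_{i+1}$ lie in $W_i$, so $U_0, U_1, \ldots, U_n$ is a finite connecting chain contained in $W_0 \cup \cdots \cup W_{n-1}$. This already proves the first assertion of the lemma.

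For the quantitative bound, the essential observation is that $W_i = W_{i+1}$ whenever $e_i \parallel e_{i+1}$: in that case both edges belong to the same family and share the tile $U_{i+1}$, so the Crossing Lemma~\ref{lem:crossing_lemma} forces $W_i = W_{i+1}$. Conversely, when $e_i \not\parallel e_{i+1}$, these are the two non-parallel sides of $U_{i+1}$ and their directions differ by the interior angle at $U_{i+1}$, hence by at least $\alpha$. Thus each transition to a new worm induces an angular jump of at least $\alpha$ in the sequence of edge directions along $L$, and on the circle of directions (of total measure $2\pi$), fewer than $\lceil 2\pi/\alpha\rceil$ distinct worms can accumulate.

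The main obstacle is that last step: the sequence of edge directions can in principle oscillate and re-enter a family via a distinct parallel worm, so a naive pigeonhole on angles alone is delicate. Closing this gap requires the Cone Lemma to rule out $L$ crossing two distinct worms of the same family in an incompatible geometric configuration; once that is secured, each distinct worm along $L$ genuinely consumes an angular slot of width at least $\alpha$, giving the bound $< \lceil 2\pi/\alpha \rceil$.
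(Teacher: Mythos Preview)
Your straight-line argument cleanly proves the qualitative statement (a finite chain in finitely many worms) and is simpler than the paper's construction for that part. But the route to the quantitative bound $\lceil 2\pi/\alpha\rceil$ does not work, and the gap you flag is fatal rather than merely delicate. A straight segment $L$ can cross arbitrarily many distinct worms: already in the integer square grid ($\alpha=\pi/2$) a long diagonal meets $n$ horizontal and $n$ vertical worms, all distinct, so the number of worms along $L$ is unbounded while $\lceil 2\pi/\alpha\rceil=4$. The Cone Lemma says a worm avoids two cones at a single edge; it does not prevent a line from crossing many parallel worms of the same family, so it cannot rescue the pigeonhole-on-directions idea. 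In short, the straight-line path is the wrong witness for the bound---it simply uses too many worms.

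The paper's proof takes a genuinely different tack: starting from $S$, it follows worms greedily, at each step choosing the next worm so that $T$ lies just beyond it, keeping $T$ always on the same side. Each ``turn'' contributes at least $\alpha$ to the cumulative turning angle, so after fewer than $\lceil 2\pi/\alpha\rceil$ turns the path has wound through $2\pi$. Then a topological argument (using the Cone Lemma to force $W_0$ and $W_k$ to intersect in the one remaining subcase) shows the constructed worm sequence would enclose $T$ in a bounded loop; but the two worms through $T$ are infinite and were never crossed by the construction, a contradiction. So $T$ must have been reached. The key difference is that the paper controls the number of worms by construction, choosing each new worm deliberately, rather than accepting whatever worms a fixed line happens to cross.
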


In plain words: If the prototiles are not arbitrarily thin, then we can 
always travel from tile $S$ to tile $T$ by walking on worms, with finitely 
``turns''; that is, with finitely many changes from one worm to another one.
The proof stresses the fact that tiles cannot be arbitrarily thin.

\begin{proof}
	\begin{figure}[htb]
			$$\begin{xy}
				\xyimport(80,110){\includegraphics{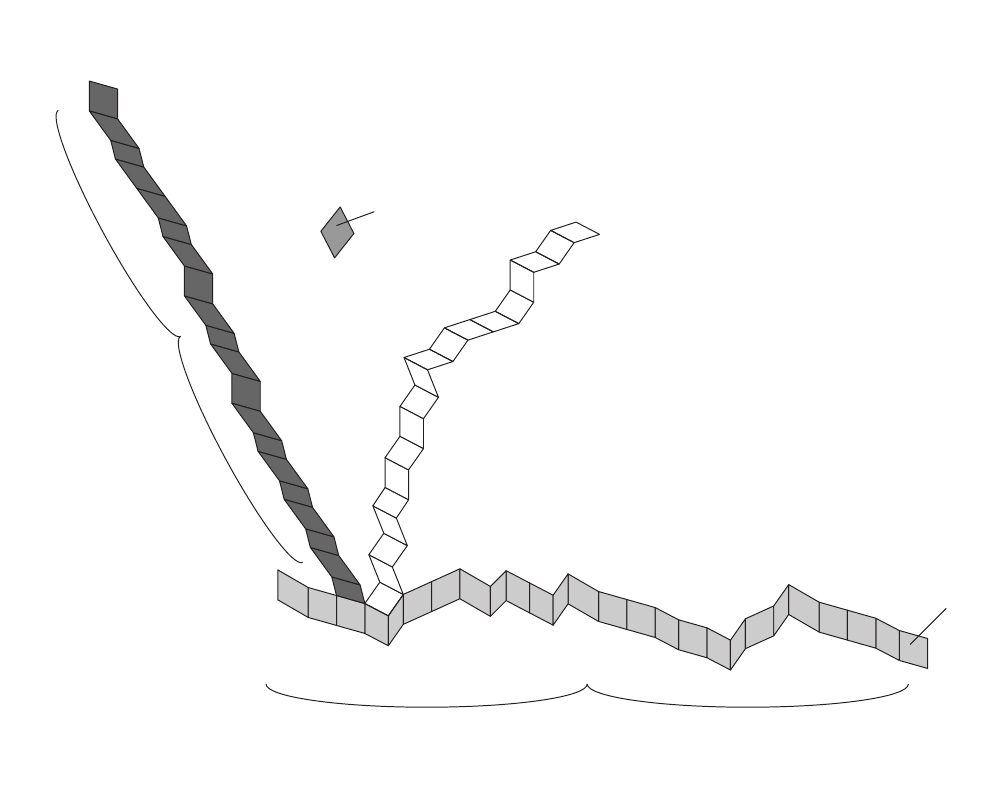}},
				(79,28)*!U\txt{$S$},
				(32,83.5)*!U\txt{$T$},
				(48,10)*!U\txt{$W_0$},
				(12,60)*!U\txt{$W_1$}
			\end{xy}$$
		\caption{Starting from a tile $S$ and worm $W_0$, there are a set of worms crossing $W_0$. A tile $T$ that does not lie on one of these worms must lie between two. We label the worm further from $S$ with $W_1$.}
		\label{fig:Tile_between_two_worms}
	\end{figure}
	
	We start from tile $S$, and choose $W_0$, one of the two worms 
passing through $S$. As, by Lemma~\ref{lem:loop_lemma} $W_0$ is not a loop, 
an infinite number of worms cross $W_0$.  The tile $T$ will either 
lie on one of these worms or between two of them. (The latter situation is 
indicated in Figure~\ref{fig:Tile_between_two_worms}.)
	
	If $T$ lies on one of the worms we are done. Otherwise there are 
two worms, defined by adjacent tiles in $W_0$, such that $T$ lies between 
these two worms. Let $W_1$ be the worm further from $S$ and $S_1$ be the 
tile given by the intersection of $W_0$ and $W_1$.  Again $T$ either lies 
on a worm crossing $W_1$ or between two worms. If $T$ does lie on such a 
worm we are done. Otherwise call the worm further from $S_1$ $W_2$. 
	
	% \begin{figure}[htb]
	% 		$$\begin{xy}
	% 			\xyimport(80,110){\includegraphics{No_way_out.pdf}},
	% 			(79,28)*!U\txt{$S$},
	% 			(32,83.5)*!U\txt{$T$},
	% 			(47,85)*!U\txt{lots of\\worms},
	% 			(40,55)*!U\txt{line that\\cannot\\be crossed},
				
	% 		\end{xy}$$
			
	% 	\caption{Eventually the sequence of worms will get trapped. It cannot go back on itself but ahead lie other worms that cannot be crossed.}
	% 	\label{fig:No_way_out}
	% \end{figure}

Using this method we obtain a sequence of worms $W_0, W_1, \ldots$.	
Note that we cannot visit infinitely many tiles of the same worm: 
In this case we would have found infinitely many worms passing between 
$S$ and $T$, which is impossible, since worms cannot be arbitrarily thin. 

Thus we obtain a sequence of finite pieces of worms by our method. We 
proceed to show that we have to arrive at $T$ in this way after finitely 
many turns. 

\begin{figure}[htb]
\begin{center}
\includegraphics[width=50mm]{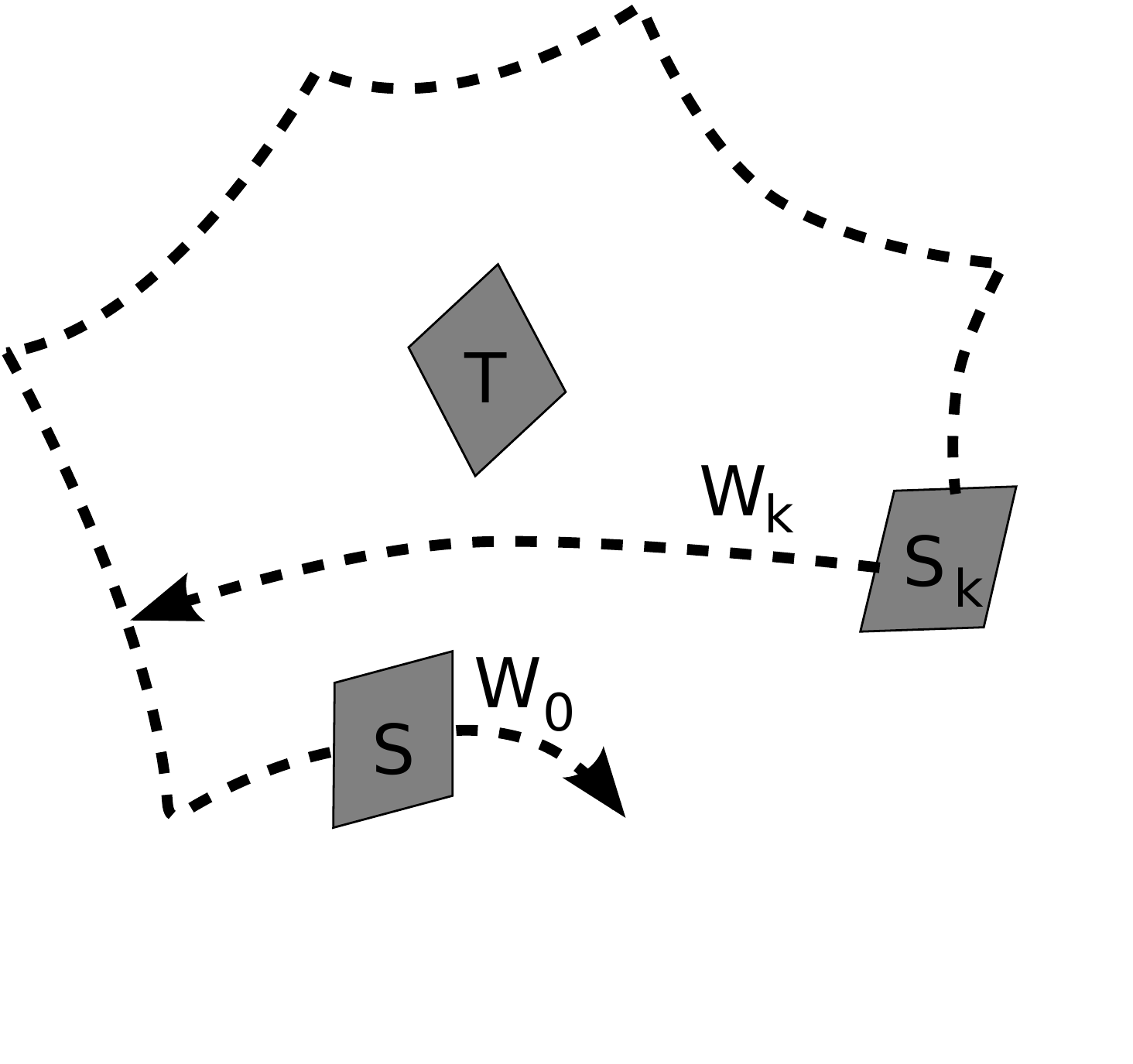}
\end{center}
\caption{Case 2: The constructed worm sequence encircles $T$.}
\label{fig:spiral}
\end{figure}

By assumption we have $\alpha > 0$. Thus after at most $k:=\lceil 2 
\pi / \alpha \rceil$ turns we completed a full turn by $2 \pi$ or more. 
Note that, by construction, the tile $T$ is always on the right hand 
side of the current worm, or always on the left hand side. Without 
loss of generality, let $T$ be always on the right hand side of
the current worm.
Now let us travel further on the current worm $W_k$, say. Either $S$ is 
contained in $W_k$ (Case 1), or on the left hand side of it (Case 2), or 
on the right hand side of it (Case 3). In the first two cases we have 
completed a loop (Case 2 is indicated in Figure~\ref{fig:spiral}).

\begin{figure}[bht]
\begin{center}
\includegraphics[width=60mm]{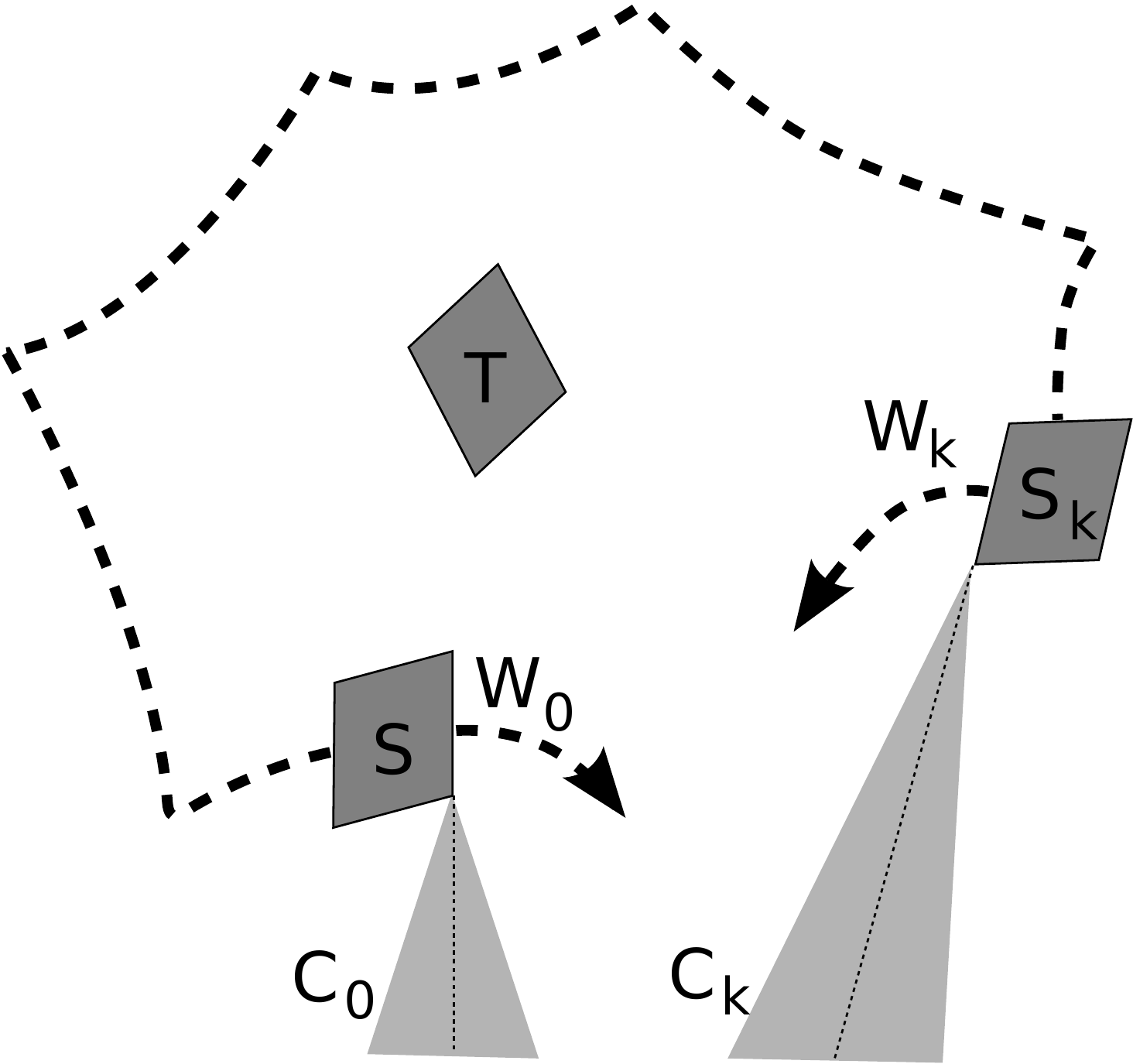}
\end{center}
\caption{Case 3: The constructed worm sequence does not entirely 
encircle $T$, but nevertheless $W_0$ and $W_k$ have to cross 
(since the cones $C_0$ and $C_k$ are tilted towards each other,
$W_0$ cannot intersect $C_0$, and $W_k$ cannot intersect $C_k$).  }
\label{fig:nowayout}
\end{figure}

Assume we have completed a loop. By construction, $T$ lies inside this 
loop. Since, by construction, our sequence of worms is not crossed by 
any worm defined by $T$, the two worms defined by $T$ must be contained 
entirely inside this loop. Since worms are infinite, and tiles are not 
arbitrarily small, this is impossible. 

It remains to consider Case 3: $S$ is on the right hand side of our 
current worm $W_k$. There are two subcases: Either $W_k$ reaches the 
first worm $W_0$, defined by $S$. In this case we have closed a loop 
again, yielding a contradiction. Or $W_k$ avoids the first worm $W_0$. 
Now we utilize the Cone Lemma~\ref{lem:cone_lemma} to show that 
this case is impossible, 
too. Let us denote the tile where we entered $W_k$ by $S_k$. By Lemma 
\ref{lem:cone_lemma}, the cone $C_0$ pointing outward our sequence 
(to the left) defined by $S$ and the edge defining $W_0$ cannot contain 
tiles of $W_0$. Similarly, the cone pointing outward defined 
by $S_k$ and the edge defining $W_k$ cannot contain tiles of $W_k$. 

Either $C_k$ intersects $W_0$, then $W_k$ intersects our worm sequence, 
closing a loop. Otherwise, since $k \ge 2\pi/\alpha$, 
the axis of $C_k$ is tilted towards $S$, thus $C_0$ and $C_k$ have 
non-empty intersection. (This situation is 
indicated in Figure~\ref{fig:nowayout}.) Neither $W_0$ nor $W_k$ can 
intersect $C_0 \cap C_k$. $W_0$ cannot intersect $C_0$, and $W_k$ 
cannot intersect $C_k$, thus $W_0$ and $W_k$ have to intersect each other. 
This closes a loop, which yields a contradiction.

Thus our constructed worm sequence cannot avoid $T$. Thus $T$ is reached 
from $S$ by using (finitely many) tiles contained in finitely many 
($k$ or less) worms. 
\end{proof}

\begin{theorem} % Infinite Orientations

\label{thm:infinite_orientations}

Let ${\mathcal T}$ be a parallelogram tiling with a finite protoset. 
Let $\alpha$ denote the minimal interior angle in the prototiles.
Each prototile occurs in a finite number of orientations in ${\mathcal T}$. 
Moreover, the number of orientations of each prototile is bounded by a 
common constant. 
\end{theorem}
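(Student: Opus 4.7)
The plan is to reduce the claim to showing that the set $D \subset [0,\pi)$ of undirected edge directions appearing in $\T$ is finite, with a cardinality bound depending only on $\alpha$ and the number $n$ of prototiles. Once this is established, each orientation of a prototile $P_i$ with interior angle $\beta_i$ is determined by specifying an edge direction in $D$ for one pair of parallel edges together with a sign choice $\pm\beta_i$ for where the second pair lies; so each prototile admits at most $2|D|$ orientations, giving both the finiteness claim and the common constant simultaneously.

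Each $d \in D$ is the common worm direction of a unique worm family. I would introduce a graph $G$ on vertex set $D$ with an edge $\{d,d'\}$ whenever some tile of $\T$ has its two pairs of parallel edges in directions $d$ and $d'$. This graph has bounded degree: an edge of $G$ arising through a tile of prototile $P_i$ forces $d' \equiv d \pm \beta_i \pmod{\pi}$, so each vertex has at most $2n$ neighbours.

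Next, I would bound the diameter of $G$ via the Travel Lemma. Fix a reference tile $S$. Given any $d \in D$, choose a tile $T$ carrying an edge in direction $d$; the Travel Lemma supplies a worm sequence $W_0, W_1, \ldots$ consisting of fewer than $k := \lceil 2\pi/\alpha \rceil$ worms joining $S$ to $T$. By the Crossing Lemma, any two consecutive worms in this sequence meet in a single tile, which as a parallelogram has exactly two edge directions --- necessarily those of the two worms. Hence consecutive worm directions are adjacent in $G$. Prepending the adjacency between a fixed edge direction of $S$ and $d_{W_0}$ (realised by $S$ itself), and appending the adjacency between the last worm direction and $d$ (realised by $T$), we obtain a $G$-path of length at most $k$ from any fixed edge direction of $S$ to the arbitrary $d$.

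Combining bounded degree with bounded diameter, $|D| \le \sum_{i=0}^{k}(2n)^i$, a constant depending only on $\alpha$ and $n$, which yields the theorem. The main obstacle I anticipate is verifying cleanly that the tile at the intersection of two consecutive worms of the Travel Lemma sequence has its two edge directions given precisely by the two worm directions --- this is the linchpin that makes the graph-theoretic formalism apply, and it relies crucially on the parallelogram shape (exactly two edge directions per tile, and they must differ since worms in the same family never cross). Once this is in place, the remainder is routine counting.
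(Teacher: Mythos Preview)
Your proposal is correct and follows essentially the same approach as the paper: fix a base tile $S$, invoke the Travel Lemma to reach any other tile through fewer than $k=\lceil 2\pi/\alpha\rceil$ worms, and observe that each worm-to-worm transition (realised by a single parallelogram, hence one of $n$ prototiles with a $\pm$ choice of angle) contributes at most $2n$ possibilities, yielding the same geometric-series bound $\sum_{i=0}^{k}(2n)^{i}$. Your graph $G$ on edge directions is a tidy repackaging of what the paper does by directly counting relative tile orientations at each turn; the ``obstacle'' you flag --- that the crossing tile of two consecutive worms has precisely those two worm directions as its edge directions --- is immediate from the definition of a worm together with the Crossing Lemma, so the argument goes through exactly as you outline.
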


\begin{proof}
This is now an immediate consequence of Lemma~\ref{lem:travel}: Fix 
some tile $S$ in the tiling. Any other tile $T$ of the same shape 
can be reached from $S$ by a sequence of worms with at most 
$k =  \lceil 2 \pi / \alpha \rceil$ turns. Let there be $m$ prototiles 
altogether. If we can connect $S$ with $T$ without any turn at all 
(i.e., $S$ and $T$ are contained in the same worm) there are $2$ 
possibilities how $T$ is oriented with respect to $S$. ($T$ can be 
a translated copy of $S$ or a reflected copy of $S$, yielding two 
different orientations unless $S$ and $T$ are squares.) If we can connect 
$S$ with $T$ using one turn, there are $2m$ possibilities  how $T$ is 
oriented with respect to $S$: $m$ possibilities arising from the 
choice of the prototile defining the turn, and two possibilities 
for any such choice (left-handed or right-handed). Analoguously,
for any turn there are $2m$ possibilities of changing the orientation.
Thus there are $(2m)^{\ell}$ possibilities if we need exactly 
$\ell$ turns. Altogether, this leaves 
\[ N:= 2 + 2m + (2 m)^2 + \cdots + (2 m)^k = 1+ \sum_{n=0}^k (2m)^k 
= \frac{1}{2m-1} \big( (2m)^{k+1} +2m -2 \big) \] 
as an upper bound for the number of different orientations of $T$ 
with respect to $S$. This is true for any tile $T$ congruent to $S$. 
Thus there are at most $N^2$ possibilities for the orientation of any two 
congruent tiles $T, T'$ to each other. 
\end{proof}

% \begin{figure}[htb]
% 	\centering
% 		\includegraphics{Infinite_orientations.pdf}
% 	\caption{This patch illustrates how, using an infinite set of prototiles, a tile (in this case a square) can lie in infinitely many orientations. 
% % The explicit construction is left as an exercise.
% }
% 	\label{fig:Infinite_orientations}
% \end{figure}

\section{Remarks}

\begin{figure}[bht]
\begin{center}
\includegraphics[width=110mm]{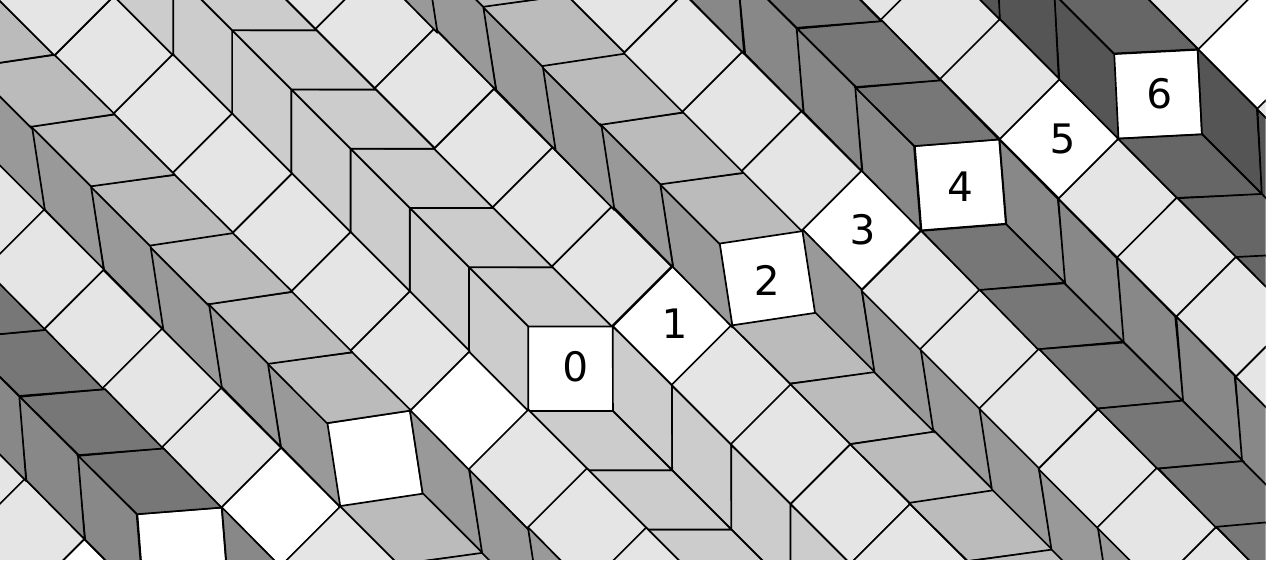}
\end{center}
\caption{This patch illustrates how a tile (in this case a square) 
can lie in infinitely many orientations, if we allow infinitely 
many prototiles.} 
\label{fig:Infinite_orientations}
\end{figure}

The tiling indicated in Figure~\ref{fig:Infinite_orientations} 
demonstrates that Theorem~\ref{thm:infinite_orientations} is sharp, 
in the following sense: 
it becomes false if we allow an infinite number of prototiles, even 
if the minimal interior angle $\alpha$ is bounded away from 0. 
In this example, the squares labelled $1,3,5,\ldots$ are rotated 
through $\frac{\pi}{4}$ with respect to the square labelled 0 
(compare Figure~\ref{fig:Infinite_orientations}). The squares with 
even label $i \ne 0$ are rotated through $\frac{\pi}{10i}$ with respect 
to the square labelled 0.
Thus these squares occur in infinitely many orientations in the tiling. 

Nevertheless, the Travel Lemma still holds in this example: The minimum
interior angle is $\alpha = \frac{\pi}{4}-\frac{\pi}{10}$ (occurring
in the thinner rhomb that touches square number 2). Thus any tile 
can be connected to the square 0 by a (finite) sequence of tiles which 
is contained in a finite number of worms.

The upper bound obtained in the proof of Theorem 
\ref{thm:infinite_orientations} is not optimal. For instance,
the angle changes induced by turns are not always independent:
the pair of turns induced by a tile of type 1 (right-handed, say) 
followed by a turn induced by a tile of type 2 (right-handed)
yields the same relative orientation as 
the pair of turns induced by a tile of type 2 (right-handed) 
followed by a turn induced by a tile of type 1 (right-handed).
However, we don't see how to bring this upper bound below
$O\big( (2m)^k \big)$. 

Throughout the paper we made the general assumption that all tilings 
are vertex-to-vertex, and that all tilings are locally finite. The 
requirement of local finiteness is essential, otherwise the Travel 
Lemma would become false: We can no longer guarantee that any two
tiles can be connected by a finite sequence of tiles at all.

The requirement that the tilings are vertex-to-vertex allows for the 
definition of worms. However, if the tilings are locally finite,
but not vertex-to-vertex, then we can dissect each parallelogram into 
finitely many parallelograms such that the new tiling is
vertex-to-vertex. Thus all results apply also to locally
finite parallelogram tilings which are not vertex-to-vertex. 

Equivalent results to those proved in this paper should apply to tilings of $\R^d$ ($d \ge 2$) by parallelotopes. In order to generalise Lemma~\ref{lem:travel}, however, we need to enclose some tile $T$ with a finite collection of tiles. This would require further objects. In addition to the pseudo-lines for worms, one must also include pseudo-planes (and pseudo-hyperplanes) made up of linked parallelotopes sharing a parallel edge in common. 

\providecommand{\bysame}{\leavevmode\hbox to3em{\hrulefill}\thinspace}
\providecommand{\MR}{\relax\ifhmode\unskip\space\fi MR }
% \MRhref is called by the amsart/book/proc definition of \MR.
\providecommand{\MRhref}[2]{%
  \href{http://www.ams.org/mathscinet-getitem?mr=#1}{#2}
}
\providecommand{\href}[2]{#2}


\begin{thebibliography}{DWMB05}

\bibitem[AGS92]{Ammann:AT}
R~Ammann, B~Gr{\"u}nbaum, and G~C Shephard, \emph{Aperiodic tiles}, Discrete
  Comput. Geom. \textbf{8} (1992), no.~1, 1--25. \MR{93g:52015}

\bibitem[AS05]{Agarwal:PADAA}
Pankaj~K. Agarwal and Micha Sharir, \emph{Pseudo-line arrangements: duality,
  algorithms, and applications}, SIAM J. Comput. \textbf{34} (2005), no.~3,
  526--552. \MR{2137080 (2005k:52036)}

\bibitem[Bee82]{Beenker:ATONP}
F~P~M Beenker, \emph{Algebraic theory of non-periodic tilings of the plane by
  two simple building blocks: a square and a rhombus}, TH-Report 82-WSK04,
  Eindhoven University of Technology, 1982.

\bibitem[BFG07]{Baake:RAPSFWAPDPP}
Michael Baake, Dirk Frettl{\"o}h, and Uwe Grimm, \emph{A radial analogue of
  {P}oisson's summation formula with applications to powder diffraction and
  pinwheel patterns}, J. Geom. Phys. \textbf{57} (2007), no.~5, 1331--1343.
  \MR{2289650 (2008e:37013)}

\bibitem[BFR08]{Bodini:CFRTWP}
Olivier Bodini, Thomas Fernique, and {\'E}ric R{\'e}mila, \emph{A
  characterization of flip-accessibility for rhombus tilings of the whole
  plane}, Inform. and Comput. \textbf{206} (2008), no.~9-10, 1065--1073.
  \MR{2440650 (2009m:68184)}

\bibitem[BFRR11]{Bodini:DRT}
Olivier Bodini, Thomas Fernique, Michael Rao, and {\'E}ric R{\'e}mila,
  \emph{Distances on rhombus tilings}, Theoret. Comput. Sci. \textbf{412}
  (2011), no.~36, 4787--4794. \MR{2856174}

\bibitem[BM00]{Baake:DIMQ}
M~Baake and R~V Moody (eds.), \emph{Directions in mathematical quasicrystals},
  American Mathematical Society, Providence, RI, 2000. \MR{2001f:52047}

\bibitem[CR06]{Chavanon:RTDSS}
Fr{\'e}d{\'e}ric Chavanon and {\'E}ric R{\'e}mila, \emph{Rhombus tilings:
  decomposition and space structure}, Discrete Comput. Geom. \textbf{35}
  (2006), no.~2, 329--358. \MR{2195059 (2006i:52028)}

\bibitem[DWMB05]{Destainville:RTHSIMT}
N.~Destainville, M.~Widom, R.~Mosseri, and F.~Bailly, \emph{Random tilings of
  high symmetry. {I}. {M}ean-field theory}, J. Stat. Phys. \textbf{120} (2005),
  no.~5-6, 799--835. \MR{2187593 (2007a:82025)}

\bibitem[Eln97]{Elnitsky:RTPCRWCG}
Serge Elnitsky, \emph{Rhombic tilings of polygons and classes of reduced words
  in {C}oxeter groups}, J. Combin. Theory Ser. A \textbf{77} (1997), no.~2,
  193--221. \MR{1429077 (98g:05039)}

\bibitem[FHK02]{Forrest:COCPT}
A~H Forrest, J~R Hunton, and J~Kellendonk, \emph{Cohomology of canonical
  projection tilings}, Comm. Math. Phys. \textbf{226} (2002), no.~2, 289--322.
  \MR{1 892 456}

\bibitem[Fre08]{Frettloh:STWSCS}
Dirk Frettl{\"o}h, \emph{Substitution tilings with statistical circular
  symmetry}, European J. Combin. \textbf{29} (2008), no.~8, 1881--1893.
  \MR{2463164 (2010c:05028)}

\bibitem[Gar77]{Gardner:ENTTE}
M~Gardner, \emph{Extraordinary nonperiodic tiling that enriches the theory of
  tiles}, Sci. Am. 236 (1977), 110--119.

\bibitem[Goo80]{Goodman:PCBGS}
Jacob~E. Goodman, \emph{Proof of a conjecture of {B}urr, {G}r\"unbaum, and
  {S}loane}, Discrete Math. \textbf{32} (1980), no.~1, 27--35. \MR{588905
  (82b:51005)}

\bibitem[GR86]{Gahler:EOTGG}
F~G{\"a}hler and J~Rhyner, \emph{Equivalence of the generalised grid and
  projection methods for the construction of quasiperiodic tilings}, J. Phys. A
  \textbf{19} (1986), no.~2, 267--277. \MR{87e:52023}

\bibitem[GS87]{Grunbaum:TandP}
B~Gr{\"u}nbaum and G~C Shephard, \emph{Tilings and patterns}, W. H. Freeman and
  Company, New York, 1987. \MR{88k:52018}

\bibitem[HF]{Harriss:TE}
D.\ Frettl\"oh, E.\ Harriss:
Tilings encyclopedia, available online at:\\
\texttt{http://tilings.math.uni-bielefeld.de.}

\bibitem[HL04]{Harriss:CSTOA}
E~O Harriss and J~S~W Lamb, \emph{Canonical substitution tilings of
  {A}mmann-{B}eenker type}, Theoret. Comput. Sci. \textbf{319} (2004), no.~1-3,
  241--279. \MR{2074956}

\bibitem[KO07]{Kenyon:LSCBE}
Richard Kenyon and Andrei Okounkov, \emph{Limit shapes and the complex
  {B}urgers equation}, Acta Math. \textbf{199} (2007), no.~2, 263--302.
  \MR{2358053 (2008h:60038)}

\bibitem[Moo97]{Moody:TMOLR}
R~V Moody (ed.), \emph{The mathematics of long-range aperiodic order}, NATO
  Advanced Science Institutes Series C: Mathematical and Physical Sciences,
  vol. 489, Dordrecht, Kluwer Academic Publishers Group, 1997. \MR{98a:52001}

\bibitem[Pen79]{Penrose:PCNTP}
R.~Penrose, \emph{Pentaplexity: a class of nonperiodic tilings of the plane},
  Math. Intelligencer \textbf{2} (1979), no.~1, 32--37. \MR{MR558670
  (81d:52012)}

\bibitem[{Pyt}02]{Fogg:SIDAC}
N~{Pytheas Fogg}, \emph{Substitutions in dynamics, arithmetics and
  combinatorics}, Lecture Notes in Mathematics, vol. 1794, Springer-Verlag,
  Berlin, 2002, Edited by V.\ Berth\'e, S.\ Ferenczi, C.\ Mauduit and A.\
  Siegel. \MR{2004c:37005}

\bibitem[Rad94]{Radin:TPTOT}
C~Radin, \emph{The pinwheel tilings of the plane}, Ann. of Math. (2)
  \textbf{139} (1994), no.~3, 661--702. \MR{95d:52021}

\bibitem[SBGC84]{Shechtman:MPWLR}
D~Shechtman, I~Blech, D~Gratias, and J~W Cahn, \emph{Metallic phase with
  long-range orientational order and no translational symmetry}, Physical
  Review Letters \textbf{53} (1984), 1951--3.

\bibitem[Spe]{Speyer:RTWMTTD}
David Speyer\phantom{x}(mathoverflow.net/users/297), \emph{Rhombus tilings with
  more than three directions}, MathOverflow,
  \texttt{http://mathoverflow.net/questions/78302} (version: 2011-11-09).

\bibitem[Sti08]{Stienstra:HSVQDDD}
Jan Stienstra, \emph{Hypergeometric systems in two variables, quivers, dimers
  and dessins d'enfants}, Modular forms and string duality, Fields Inst.
  Commun., vol.~54, Amer. Math. Soc., Providence, RI, 2008, pp.~125--161.
  \MR{2454323 (2010e:14032)}

\end{thebibliography}
\end{document}